\newtheoremstyle{normal}{5pt}{5pt}{\normalfont}{}{\bfseries}{}{0.4em}{\bfseries{\thmname{#1}\thmnumber{ #2}.\thmnote{ \hspace{0.5em}(#3)\newline}}}
\newtheoremstyle{kursiv}{5pt}{5pt}{\itshape}{}{\bfseries}{}{0.4em}{\bfseries{\thmname{#1}\thmnumber{ #2}.\thmnote{ \hspace{0.5em}(#3)\newline}}}
\theoremstyle{kursiv}
\newtheorem{thm}{Theorem}
\newtheorem{lem}[thm]{Lemma}
\newtheorem{prop}[thm]{Proposition}
\newtheorem{dfn}[thm]{Definition}
\theoremstyle{normal}
\newtheorem{ex}[thm]{Example}
\newcommand{\T}{(T(t))_{t\geqslant0}}
\newcommand{\Sg}{(S(t))_{t\geqslant0}}
\newcommand{\id}{\operatorname{id}\nolimits}
\newcommand{\rad}{\operatorname{r}\nolimits}
\renewcommand{\Re}{\operatorname{Re}\nolimits}
\newcommand{\s}{\operatorname{s}\nolimits}
\newcommand{\cs}{\operatorname{cs}\nolimits}
\newcommand{\Bigsum}[2]{\ensuremath{\mathop{\textstyle\sum}_{#1}^{#2}}}
\begin{document}

\title{The growth bound for strongly continuous\\semigroups on Fr\'{e}chet spaces}

\author{Sven-Ake Wegner\,\MakeLowercase{$^{\text{a}}$}}

\renewcommand{\thefootnote}{}
\hspace{-1000pt}\footnote{\hspace{5.5pt}2010 \emph{Mathematics Subject Classification}: Primary 47D06; Secondary 46A04, 34G10.}
\hspace{-1000pt}\footnote{\hspace{5.5pt}\emph{Key words and phrases}: semigroup, growth bound, spectral bound, power bounded operator, Fr\'{e}chet space.\vspace{1.6pt}}
\hspace{-1000pt}\footnote{$^{\text{a}}$\,Sobolev Institute of Mathematics, Pr.~Akad.~Koptyuga 4, 630090, Novosibirsk, Russia, Phone:+7\hspace{1.2pt}(8)\hspace{1.2pt}383\hspace{1.2pt}/\hspace{1.2pt}363\hspace{1.2pt}-\hspace{1.2pt}4648,\linebreak\phantom{x}\hspace{12.5pt}Fax:\hspace{1.2pt}\hspace{1.2pt}+7\hspace{1.2pt}(8)\hspace{1.2pt}383\hspace{1.2pt}/\hspace{1.2pt}333\hspace{1.2pt}-\hspace{1.2pt}2598, eMail: wegner@math.uni-wuppertal.de.}

\begin{abstract}
We introduce the concepts of growth and spectral bound for strongly continuous semigroups acting on Fr\'{e}chet spaces and show that the Banach space inequality $s(A)\leqslant\omega_0(T)$ extends to the new setting. Via a concrete example of an even uniformly continuous semigroup we illustrate that for Fr\'{e}chet spaces effects with respect to these bounds may happen that cannot occur on a Banach space.
\end{abstract}

\maketitle


\section{Introduction}\label{SEC-1}

Not all results on strongly continuous semigroups carry over from the world of Banach spaces to those of Fr\'{e}chet spaces. For example, there exist semigroups with super exponential growth or uniformly continuous semigroups whose exponential series representations are divergent.  From another perspective, exactly these phenomena illustrate that in the Fr\'{e}chet case interesting effects may happen to take place where the picture is clear if the underlying space is Banach. We refer to Albanese, Bonet, Ricker \cite{ABR13} and Frerick, Jord\'{a}, Kalmes, Wengenroth \cite{FEJKW} for very recent results of this nature.
\smallskip
\\In this article we first show that the concept of growth bound can be transferred to Fr\'{e}chet spaces in two different ways, namely in a purely topological sense or with respect to a fixed fundamental system of seminorms. Then we consider classical and recent approaches for non Banach spectral theories to define the spectral bound. We show that for all these spectral theories and for both definitions of the growth bound the classical inequality between spectral and growth bound extends to Fr\'{e}chet spaces. Finally we consider an example of an even uniformly continuous semigroup and show that depending on the notion of growth resp.~spectral bound which we choose, their equality may fail. This effect cannot occur on a Banach space.
\smallskip
\\For the theory of Fr\'{e}chet spaces we refer to Jarchow \cite{Jarchow} and K\"othe \cite{KoetheII}. For the theory of semigroups we refer to Engel, Nagel \cite{EN}, Albanese, Bonet, Ricker \cite{ABR13}, Yosida \cite{Yosida} and K{\=o}mura \cite[Section 1]{Komura}.

 
\medskip
\section{Growth Bound}\label{SEC-2}

For the whole article let $X$ be a Fr\'{e}chet space, i.e., a complete metrizable locally convex space. We denote by $\cs(X)$ the set of all continuous seminorms on $X$, by $\mathcal{B}$ the collection of all bounded subsets of $X$ and by $L(X)$ the space of all linear and continuous maps from $X$ into itself. We write $L_b(X)$, if $L(X)$ is endowed with the topology of uniform convergence on the bounded subsets of $X$ given by the seminorms $q_B(S)=\sup_{x\in B}q(Sx)$ for $S\in L(X)$, $B\in\mathcal{B}$ and $q\in\cs(X)$. By \cite[Prop.~11.2.7]{Jarchow} a subset of $L(X)$ is equicontinuous if and only if it is bounded in $L_b(X)$ and by \cite[\S 39, 6(5)]{KoetheII}, $L_b(X)$ is a complete locally convex space.
\smallskip
\\Also for the whole article let $\T$ be a $C_0$-semigroup on $X$. That is, $T(t)\in L(X)$ for $t\geqslant0$, $T(0)=\id_X$, $T(t+s)=T(t)T(s)$ for $t$, $s\geqslant0$ and $\lim_{t\rightarrow t_0}T(t)x=T(t_0)x$ for $x\in X$ and $t_0\geqslant0$. We say that $\T$ is exponentially equicontinuous of order $\omega\in\mathbb{R}$, if
\begin{equation*}
\forall\:q\in\Gamma\;\exists\:p\in\Gamma,\,M\geqslant1\;\forall\:t\geqslant0,\,x\in X\colon q(T(t)x)\leqslant M e^{\omega t} p(x)
\end{equation*}
holds for some or, equivalently, for every fundamental system of seminorms $\Gamma\subseteq\cs(X)$. The $C_0$-semigroup is said to be exponentially equicontinuous, if the above condition holds for some $\omega$ and equicontinuous, if the latter holds with $\omega=0$. Note that $\T$ is exponentially equicontinuous of order $\omega$ if and only if $\{e^{-\omega t}T(t)\}_{t\geqslant0}$ is equicontinuous or, equivalently, bounded in $L_b(X)$.

\begin{dfn} Let $\T$ be an exponentially equicontinuous $C_0$-semigroup. We denote by $\omega_0(T)$ the infimum over all $\omega\in\mathbb{R}$ for which 
$\{e^{-\omega t}T(t)\}_{t\geqslant0}$ is equicontinuous and call this number the \emph{growth bound} of $\T$.
\end{dfn}

The main ingredient for the following Lemma \ref{LEM-0} is the \textquotedblleft{}recalibration trick\textquotedblright{} of Moore \cite[Thm.~4]{Moore}. According to Joseph \cite[Dfn.~3.1]{J} we call $T\in L(X)$ universally bounded with respect to a fundamental system $\Gamma$ if there exists $N\geqslant0$ such that $q(Tx)\leqslant Nq(x)$ holds for all $q\in\Gamma$ and $x\in X$.

\begin{lem}\label{LEM-0} Let $\T$ be a $C_0$-semigroup and $\omega\in\mathbb{R}$. The following are equivalent.\vspace{2pt}
\begin{compactitem}
\item[(i)] The semigroup $\T$ is exponentially equicontinuous of order $\omega$.\vspace{2pt}
\item[(ii)] There exists a fundamental system $\Gamma$ and a constant $M\geqslant1$ such that 
\begin{equation*}
\forall\:q\in\Gamma,\,t\geqslant0,\,x\in X \colon q(T(t)x)\leqslant Me^{\omega t}q(x)
\end{equation*}
holds.\vspace{2pt}
\item[(iii)] There exists a fundamental system $\Gamma$ such that every operator in the semigroup is universally bounded with respect to $\Gamma$ and there exists a constant $M\geqslant1$ such that 
\begin{equation*}
\|T(t)\|_{\Gamma}=\sup_{q\in\Gamma}\sup_{q(x)\leqslant1}q(T(t)x)\leqslant M e^{\omega t}
\end{equation*}
holds for all $t\geqslant0$.
\end{compactitem}
\end{lem}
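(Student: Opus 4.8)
The plan is to run the cycle (i) $\Rightarrow$ (ii) $\Rightarrow$ (iii) $\Rightarrow$ (i); only the first implication has genuine content, and it is exactly where Moore's recalibration trick enters.

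For (i) $\Rightarrow$ (ii) I would fix an arbitrary directed fundamental system $\Gamma_0=\{q_\iota\}_\iota\subseteq\cs(X)$. By hypothesis (and the remark that exponential equicontinuity of order $\omega$ means equicontinuity of $\{e^{-\omega t}T(t)\}_{t\geqslant0}$) there are, for each $\iota$, some $p_\iota\in\Gamma_0$ and $M_\iota\geqslant1$ with $e^{-\omega t}q_\iota(T(t)x)\leqslant M_\iota\,p_\iota(x)$ for all $t\geqslant0$ and $x\in X$. I would then recalibrate by setting
\[
\tilde q_\iota(x):=\sup_{t\geqslant0}e^{-\omega t}q_\iota(T(t)x)\qquad(x\in X).
\]
Each $\tilde q_\iota$ is a seminorm with $q_\iota\leqslant\tilde q_\iota\leqslant M_\iota\,p_\iota$ — the left inequality by evaluating the supremum at $t=0$ (where $T(0)=\id_X$), the right one from the equicontinuity estimate — so $\tilde q_\iota\in\cs(X)$, and, being sandwiched between members of a topology-generating family (and monotone in $\iota$), $\Gamma:=\{\tilde q_\iota\}_\iota$ is again a directed fundamental system. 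The key point of the construction is near-invariance under the semigroup: using $T(t)T(s)=T(t+s)$ and substituting $u=t+s$,
\[
\tilde q_\iota(T(s)x)=\sup_{t\geqslant0}e^{-\omega t}q_\iota(T(t+s)x)=e^{\omega s}\sup_{u\geqslant s}e^{-\omega u}q_\iota(T(u)x)\leqslant e^{\omega s}\,\tilde q_\iota(x)
\]
for all $s\geqslant0$ and $x\in X$. Thus (ii) holds for $\Gamma$, in fact with the constant $M=1$.

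The implication (ii) $\Rightarrow$ (iii) is purely formal: for each fixed $t\geqslant0$ the estimate in (ii) shows that $T(t)$ is universally bounded with respect to $\Gamma$ with constant $Me^{\omega t}$, and it immediately gives $\|T(t)\|_\Gamma=\sup_{q\in\Gamma}\sup_{q(x)\leqslant1}q(T(t)x)\leqslant Me^{\omega t}$. For (iii) $\Rightarrow$ (i) I would first recover the pointwise estimate: for $q\in\Gamma$ and $x\in X$ with $q(x)\neq0$, inserting $x/q(x)$ into the definition of $\|T(t)\|_\Gamma$ yields $q(T(t)x)\leqslant Me^{\omega t}q(x)$, while for $q(x)=0$ the universal boundedness of $T(t)$ forces $q(T(t)x)=0$; hence $q(T(t)x)\leqslant Me^{\omega t}q(x)$ for all $q\in\Gamma$, $t\geqslant0$, $x\in X$. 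Taking $p=q$ then shows that the defining inequality of exponential equicontinuity of order $\omega$ holds for the fundamental system $\Gamma$, hence — by the ``some or, equivalently, every'' remark preceding that definition — for every fundamental system.

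The only delicate point is the recalibration in (i) $\Rightarrow$ (ii), and there the sole thing to verify is that the new seminorms $\tilde q_\iota$ are finite-valued and continuous; this is precisely what equicontinuity of $\{e^{-\omega t}T(t)\}_{t\geqslant0}$ buys, through the dominating seminorm $M_\iota p_\iota$. No compactness, measurability or completeness considerations are needed, so I do not anticipate any real obstacle beyond keeping track of the index substitution in the supremum.
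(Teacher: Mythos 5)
Your proposal is correct and follows essentially the same route as the paper: the recalibration $\tilde q(x)=\sup_{t\geqslant0}e^{-\omega t}q(T(t)x)$ for (i)$\Rightarrow$(ii) (the paper delegates the verification to a cited remark, while you spell out the sandwich $q\leqslant\tilde q\leqslant M p$ and the near-invariance), and the same two observations for the equivalence with (iii), namely that universal boundedness forces $q(T(t)x)=0$ whenever $q(x)=0$ and that normalizing $x$ recovers the pointwise estimate from $\|T(t)\|_\Gamma$. The only cosmetic difference is that you run the cycle (i)$\Rightarrow$(ii)$\Rightarrow$(iii)$\Rightarrow$(i) whereas the paper proves (i)$\Leftrightarrow$(ii) and (ii)$\Leftrightarrow$(iii) separately; the mathematical content is identical.
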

\begin{proof}\textquotedblleft{}(i)$\Rightarrow$(ii)\textquotedblright{} Let $\Gamma'$ be some fundamental system and let $\omega$ be such that $\{e^{-\omega t}T(t)\}_{t\geqslant0}$ is equicontinuous. Define $\Gamma=\{q\:;\:q'\in\Gamma'\}$ via $q(x)=\sup_{t\geqslant0}q'(e^{-\omega t}T(t)x)$ for $x\in X$. By \cite[Rem.~2.2(i)]{ABR13}, $\Gamma$ is a fundamental system for $X$ satisfying the desired property with $M=1$. The converse is trivial.
\medskip
\\\textquotedblleft{}(iii)$\Rightarrow$(ii)\textquotedblright{} Let $\Gamma$ be as in (iii). For $t\geqslant0$ there exists $N$ such that $q(T(t)x)\leqslant Nq(x)$ holds for all $q\in\Gamma$ and $x\in X$. We note that therefore, $q(x)=0$ always implies $q(T(t)x)=0$. Now we fix $q\in\Gamma$ and select $M\geqslant1$ according to (iii). Let $t\geqslant0$ and $x\in X$ be given. If $q(x)=0$ it follows $q(T(t)x)=0$ by the first sentence of this paragraph. Otherwise, we can select $\lambda>0$ such that $q(\lambda x)=1$. Then, $\lambda q(T(t)x)=q(T(t)(\lambda x))\leqslant M e^{\omega t}= M e^{\omega t}q(\lambda x)=\lambda M e^{\omega t}q(x)$ holds and whence $q(T(t)x)\leqslant M e^{\omega t}q(x)$ is valid. Again, the converse is trivial.
\end{proof}

Let us mention that $\T$ can be exponentially equicontinuous but for some $\Gamma$ the condition in Lemma \ref{LEM-0}(ii) can fail for every $\omega$. Consider $X=\mathbb{C}^2$, $T(t)x=(x_1+tx_2, x_2)$ for $x=(x_1,x_2)$, $t\geqslant0$. Then $\omega_0(T)=0$,  cf.~\cite[Ex.~I.5.7(i)]{EN}. Let $\omega>0$ and $\Gamma=\{p_1,p_2\}$, $p_1(x)=|x_1|$, $p_2(x)=\max_{i=1,2}|x_i|$. Let $M\geqslant1$, select $t>0$ and $x=(0,1)$. Then, $p_1(T(t)x)=t>0$ and $p_1(x)=0$ hold. It follows $\|T(t)\|_{\Gamma}=\infty$ for $t>0$.
\medskip
\\From Lemma \ref{LEM-0} we get
\begin{equation*}
\omega_0(T)=\inf\{\omega\in\mathbb{R}\:;\:\exists\:\Gamma,\,M\geqslant1\;\forall\:q\in\Gamma,\,t\geqslant0,\,x\in X\colon q(T(t)x)\leqslant Me^{\omega t}q(x)\}
\end{equation*}
where we may also restrict ourselves to $M=1$ in view of the proof of Lemma \ref{LEM-0}. If one allows $M\geqslant1$, in the Banach space analogue of the above formula \cite[Dfn.~I.5.6]{EN} it is not necessary to take the infimum over all norms which induce the topology of the underlying space. For Fr\'{e}chet spaces and fundamental systems this is not the case.

\begin{dfn} Let $\T$ be an exponentially equicontinuous $C_0$-semigroup. We define
\begin{equation*}
\omega_{0,\Gamma}(T)=\inf\{\omega\in\mathbb{R}\:;\:\exists\:M\geqslant1\;\forall\:q\in\Gamma,\,t\geqslant0,\,x\in X\colon q(T(t)x)\leqslant Me^{\omega t}q(x)\}
\end{equation*}
for a fixed fundamental system $\Gamma$. By definition, $\omega_0(T)$ is the infimum over all the $\omega_{0,\Gamma}(T)$.
\end{dfn}

Using the fact that the proof of \textquotedblleft{}(ii)$\Leftrightarrow$(iii)\textquotedblright{} in Lemma \ref{LEM-0} worked without a recalibration, variants of Banach space growth bound formulas \cite[Section IV.2]{EN} for $\omega_{0,\Gamma}(T)$ can be established by following the lines of their classical proofs.

\begin{prop}\label{PROP-1} Let $\T$ be an exponentially equicontinuous $C_0$-semigroup and $\Gamma$ be a fundamental system which satisfies the equivalent conditions in Lemma \ref{LEM-0}(ii) and Lemma \ref{LEM-0}(iii). Then,
\begin{equation*}
\omega_{0,\Gamma}(T)=\inf_{t>0}{\textstyle\frac{1}{t}}\log\|T(t)\|_{\Gamma} = \lim_{t\rightarrow\infty}{\textstyle\frac{1}{t}}\log\|T(t)\|_{\Gamma}={\textstyle\frac{1}{t_0}}\log(\lim_{n\rightarrow\infty}\|T(t_0)^n\|_{\Gamma}^{1/n})
\end{equation*}
holds for arbitrary $t_0>0$.
\end{prop}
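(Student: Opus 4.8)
The plan is to mimic the classical Banach-space argument for the growth-bound formula (see \cite[Prop.~IV.2.2]{EN}), the point being that in the present setting $\|\cdot\|_\Gamma$ behaves exactly like an operator norm: by the hypothesis on $\Gamma$, every $T(t)$ is universally bounded with respect to $\Gamma$, so $\|\cdot\|_\Gamma$ is finite on the whole semigroup, it is submultiplicative (if $q(x)\leqslant1$ then $q(T(t)x)\leqslant\|T(t)\|_\Gamma$, hence $q(T(t+s)x)=q(T(s)T(t)x)\leqslant\|T(s)\|_\Gamma\|T(t)\|_\Gamma$, and taking suprema gives $\|T(t+s)\|_\Gamma\leqslant\|T(s)\|_\Gamma\|T(t)\|_\Gamma$), and it satisfies $\|T(0)\|_\Gamma=1$. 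Thus $t\mapsto\log\|T(t)\|_\Gamma$ is a subadditive function on $[0,\infty)$.

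First I would record the inequality $\omega_{0,\Gamma}(T)\leqslant\inf_{t>0}\frac1t\log\|T(t)\|_\Gamma$: fix $t_1>0$, write $\omega=\frac1{t_1}\log\|T(t_1)\|_\Gamma$, and for arbitrary $t\geqslant0$ decompose $t=nt_1+r$ with $n\in\mathbb N_0$ and $0\leqslant r<t_1$; submultiplicativity gives $\|T(t)\|_\Gamma\leqslant\|T(t_1)\|_\Gamma^{\,n}\|T(r)\|_\Gamma$, and bounding $\|T(r)\|_\Gamma$ by $M_1:=\sup_{0\leqslant r\leqslant t_1}\|T(r)\|_\Gamma$ (finite by the Lemma~\ref{LEM-0}(iii) bound, which yields $M_1\leqslant Me^{|\omega_1|t_1}$ for any admissible $\omega_1$) one obtains $q(T(t)x)\leqslant\|T(t)\|_\Gamma q(x)\leqslant M_1 e^{\omega t}q(x)$ after absorbing the remainder exponent, so $\omega$ is admissible and $\omega_{0,\Gamma}(T)\leqslant\omega$. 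Taking the infimum over $t_1$ gives one half. Conversely, if $\omega>\omega_{0,\Gamma}(T)$ then there is $M\geqslant1$ with $q(T(t)x)\leqslant Me^{\omega t}q(x)$ for all $q\in\Gamma$, $x\in X$, $t\geqslant0$, hence $\|T(t)\|_\Gamma\leqslant Me^{\omega t}$ and $\frac1t\log\|T(t)\|_\Gamma\leqslant\frac1t\log M+\omega$, so $\limsup_{t\to\infty}\frac1t\log\|T(t)\|_\Gamma\leqslant\omega$; letting $\omega\downarrow\omega_{0,\Gamma}(T)$ closes the circle and simultaneously shows the $\limsup$ equals the infimum.

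Next I would invoke Fekete's subadditive lemma for $f(t)=\log\|T(t)\|_\Gamma$: since $f$ is subadditive and bounded above on compact intervals of $(0,\infty)$ (again by the Lemma~\ref{LEM-0}(iii) estimate), $\lim_{t\to\infty}f(t)/t$ exists in $[-\infty,\infty)$ and equals $\inf_{t>0}f(t)/t$. Combined with the previous paragraph this gives $\omega_{0,\Gamma}(T)=\inf_{t>0}\frac1t\log\|T(t)\|_\Gamma=\lim_{t\to\infty}\frac1t\log\|T(t)\|_\Gamma$. For the last equality, fix $t_0>0$ and apply the scalar spectral-radius (Fekete) formula to the single operator-norm sequence $a_n:=\|T(t_0)^n\|_\Gamma=\|T(nt_0)\|_\Gamma$, which is submultiplicative in $n$, so $\lim_n a_n^{1/n}=\inf_n a_n^{1/n}=\exp(\inf_n\frac1n\log\|T(nt_0)\|_\Gamma)$; dividing by $t_0$ and matching this against $\inf_{t>0}\frac1t\log\|T(t)\|_\Gamma$ — the inequality $\geqslant$ is immediate since $\{nt_0\}$ is a subset of $(0,\infty)$, and $\leqslant$ follows from the fact that along $t=nt_0+r$ the remainder contributes $o(1)$ to $\frac1t\log\|T(t)\|_\Gamma$ — yields $\frac1{t_0}\log(\lim_n\|T(t_0)^n\|_\Gamma^{1/n})=\omega_{0,\Gamma}(T)$.

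The main obstacle, and the only place where the Fréchet setting demands care, is ensuring that $t\mapsto\|T(t)\|_\Gamma$ is genuinely locally bounded and that the limit is not $-\infty$ in a way that breaks the identities: both are guaranteed by the standing hypothesis that $\Gamma$ realises Lemma~\ref{LEM-0}(iii), i.e. $\|T(t)\|_\Gamma\leqslant Me^{\omega t}$ for \emph{some} real $\omega$, so $f(t)/t$ is bounded and all infima/limits live in $\mathbb R$. Everything else is a transcription of the Banach-space proof with $\|\cdot\|$ replaced by $\|\cdot\|_\Gamma$, using that — as the paper emphasises — the equivalence (ii)$\Leftrightarrow$(iii) in Lemma~\ref{LEM-0} needed no recalibration, so $\|\cdot\|_\Gamma$ may be used freely as a submultiplicative gauge throughout.
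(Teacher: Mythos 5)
Your proposal is correct and follows essentially the same route as the paper: establish that $\|\cdot\|_\Gamma$ is submultiplicative on the semigroup (using the homogeneity/zero-seminorm argument to pass from $\sup_{q(y)\leqslant1}q(T(t)y)$ to $q(T(t)x)\leqslant\|T(t)\|_\Gamma q(x)$), apply the subadditive (Fekete) lemma to $t\mapsto\log\|T(t)\|_\Gamma$, compare with $\omega_{0,\Gamma}(T)$ via the two standard inequalities using local boundedness of $\|T(\cdot)\|_\Gamma$, and read off the last equality along the sequence $t=nt_0$. The only cosmetic difference is that you prove $\omega_{0,\Gamma}(T)\leqslant\inf_{t>0}\frac1t\log\|T(t)\|_\Gamma$ directly by the division-algorithm decomposition $t=nt_1+r$, whereas the paper derives it after the fact from the limit $\omega'$ and the bound on $[0,t_0]$; both are the classical argument.
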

\begin{proof} We start with the first and the second equality and abbreviate $\omega_{0,\Gamma}=\omega_{0,\Gamma}(T)$. Define $\xi\colon[0,\infty)\rightarrow\mathbb{R}$, $\xi(t)=\log\|T(t)\|_{\Gamma}$, which is bounded on compact intervals. Fix $s$, $t\geqslant0$ and $q\in\Gamma$. Put $C=\sup_{q(y)\leqslant1}q(T(t)y)<\infty$. If $q(x)\not=0$ we select $\lambda>0$ such that $q(\lambda x)=1$ and compute $q(T(t)x)=\lambda^{-1}q(T(t)(\lambda x))\leqslant\lambda^{-1}C q(\lambda x)=C q(x)$. If $q(x)=0$, it follows $q(T(t)x)=0$, cf.~the proof of Lemma \ref{LEM-0}. Therefore the estimate $q(T(t)x)\leqslant Cq(x)$ is true for all $x\in X$. We thus get that $q(T(t)T(s)x)\leqslant \sup_{q(y)\leqslant1}q(T(t)y)\cdot q(T(s)x)$ holds for every $x\in X$ and obtain finally the estimate $\sup_{q(x)\leqslant1}q(T(s)T(t)x)\leqslant\sup_{q(x)\leqslant1}q(T(s)x)\cdot\sup_{q(x)\leqslant1}q(T(t)x)$. From the latter it follows that $\xi$ is subadditive. From \cite[Lem.~IV.2.3]{EN} we obtain that $\omega'=\inf_{t\geqslant0}{\
textstyle \frac{1}{t}}\log\|T(t)\|_{\Gamma}=\lim_{t\rightarrow\infty}{\textstyle\frac{1}{t}}\log\|T(t)\|_{\Gamma}$ exists in $\mathbb{R}$. We deduce that $e^{
\omega't}\leqslant\|T(t)\|_{\Gamma}$ is valid for all $t\geqslant0$. Hence, $\omega'\leqslant\omega_{0,\Gamma}$ must hold. Let $\omega>\omega'$. Then there exists $t_0>0$ such that $\frac{1}{t}\log\|T(t)\|_{\Gamma}\leqslant\omega$, hence $\|T(t)\|_{\Gamma}\leqslant e^{\omega t}$, holds for all $t\geqslant t_0$. Since $\sup_{0\leqslant{}t\leqslant{}t_0}\|T(t)\|_{\Gamma}<\infty$, we find $M\geqslant1$ such that $\|T(t)\|_{\Gamma}\leqslant Me^{\omega t}$ holds for every $t\geqslant0$ and thus $\omega_{0,\Gamma}\leqslant\omega$ must hold. Since $\omega>\omega'$ was arbitrary, $\omega_{0,\Gamma}\leqslant\omega'$ follows. To finish the proof, we compute $\omega_{0,\Gamma}=\lim_{n\rightarrow\infty}({\textstyle\frac{1}{nt_0}}\log\|T(nt_0)\|_{\Gamma})={\textstyle\frac{1}{t_0}}\lim_{n\rightarrow\infty}(\log\|T(t_0)^n\|_{\Gamma}^{1/n})$.
\end{proof}

If $X$ is a Banach space, $\omega_0(T)$ coincides with the classical growth bound; in contrast $\omega_{0,\Gamma}(T)$ must not even be finite. On the other hand a \textquotedblleft{}reasonable\textquotedblright{} choice for $\Gamma$, e.g., $\Gamma=\{\|\cdot\|\}$ for any norm $\|\cdot\|$ inducing the topology of $X$, yields $\omega_0(T)=\omega_{0,\Gamma}(T)$ whenever $X$ is Banach. For Fr\'{e}chet spaces, the equality $\omega_0(T)=\omega_{0,\Gamma}(T)$ can fail even for \textquotedblleft{}nice\textquotedblright{} fundamental systems $\Gamma$, cf.~Example \ref{EX-0}.


\medskip
\section{Spectral Bound}\label{SEC-3}

The space $L_b(X)$ is an associative algebra with identity in which left resp.~right multiplication with a fixed element defines a continuous map. Using \cite[Prop.~2.6]{Allan} it follows that $L_b(X)$ is a pseudo complete locally convex algebra in the sense of Allan \cite[Terminology~1.1 and Dfn.~2.5]{Allan}. According to \cite[Dfn.~2.1]{Allan} we say that $B\in L_b(X)$ is bounded if there exists $\mu\in\mathbb{C}\backslash\{0\}$ such that $\{(\mu{}B)^n\}_{n\in{\mathbb{N}_0}}\subseteq L_b(X)$ is bounded and denote the set of bounded elements by $L_b(X)_0$. We use the convention $\mathbb{N}=\{1,2,\dots\}$ and $\mathbb{N}_0=\{0,1,2,\dots\}$.

\begin{dfn}
Let $D(A)\subseteq X$ be a linear subspace and $A\colon D(A)\rightarrow X$ be linear. We put $\delta_A=\{\infty\}$, if $A\in L_b(X)_0$, and $\delta_A=\varnothing$ otherwise. Then the resolvent set of $A$ is defined by
\begin{equation*}
\rho(A) = \big\{ \lambda\in\mathbb{C}\:;\:\lambda-A\colon D(A)\rightarrow X \text{ is bijective and }R(\lambda,A)=(\lambda-A)^{-1}\in L_b(X)_0 \big\} \cup \delta_A.
\end{equation*}
The spectrum is defined via $\sigma(A)=\overline{\mathbb{C}}\,\backslash\,\rho(A)$ and the spectral bound by $\s(A)=\sup\{\Re\lambda\:;\:\lambda\in\sigma(A)\cap\mathbb{C}\}$.
\end{dfn}

For $A\in L(X)$ the above definition of resolvent and spectrum coincides with the those given by Allan in \cite[Dfn.~3.1 and Dfn.~3.2]{Allan}. We need the following characterization of the complex part of the resolvent.

\begin{lem}\label{LEM-2} Let $A\colon D(A)\rightarrow X$ be a linear operator and $\mathcal{U}\subseteq\mathbb{C}$. Assume that $R(\lambda,A)$ exists for every $\lambda\in\mathcal{U}$. Then, $\mathcal{U}\subseteq\rho(A)$ holds if and only if for every $\lambda\in\mathcal{U}$ there exists a fundamental system $\Gamma$ and a constant $M_{\lambda}>0$ such that $p(R(\lambda,A)x)\leqslant{}M_{\lambda}\,p(x)$ is valid for all $p\in\Gamma$ and $x\in X$.
\end{lem}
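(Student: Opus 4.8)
The plan is to fix a single $\lambda\in\mathcal{U}$ and prove the stated equivalence for it; quantifying over $\mathcal{U}$ then gives the lemma. Since $\mathcal{U}\subseteq\mathbb{C}$, the point $\infty$ and the set $\delta_A$ are irrelevant, and as $\lambda-A$ is bijective by hypothesis, the only content of \textquotedblleft{}$\lambda\in\rho(A)$\textquotedblright{} left to characterise is the membership $R(\lambda,A)\in L_b(X)_0$. Thus it suffices to show: $R(\lambda,A)\in L_b(X)_0$ holds if and only if there are a fundamental system $\Gamma$ and a constant $M_\lambda>0$ with $p(R(\lambda,A)x)\leqslant M_\lambda p(x)$ for all $p\in\Gamma$ and $x\in X$. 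The two standing facts I would use are that a subset of $L(X)$ is equicontinuous if and only if it is bounded in $L_b(X)$ \cite[Prop.~11.2.7]{Jarchow}, and that a fundamental system dominates every continuous seminorm.

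For \textquotedblleft{}$\Leftarrow$\textquotedblright{}, suppose such $\Gamma$ and $M_\lambda$ are given. First I note that the estimate already forces $R(\lambda,A)\in L(X)$: for $q\in\cs(X)$ pick $p\in\Gamma$ and $c>0$ with $q\leqslant cp$, so that $q(R(\lambda,A)x)\leqslant cM_\lambda p(x)$. Iterating the hypothesis gives $p(R(\lambda,A)^n x)\leqslant M_\lambda^n p(x)$ for every $n\in\mathbb{N}_0$, $p\in\Gamma$ and $x\in X$. Choosing $\mu\in\mathbb{C}\backslash\{0\}$ with $|\mu|M_\lambda\leqslant1$, say $\mu=(1+M_\lambda)^{-1}$, we obtain $p\bigl((\mu R(\lambda,A))^n x\bigr)\leqslant p(x)$ for all $n\in\mathbb{N}_0$, $p\in\Gamma$ and $x\in X$, so $\{(\mu R(\lambda,A))^n\}_{n\in\mathbb{N}_0}$ is equicontinuous, hence bounded in $L_b(X)$ by \cite[Prop.~11.2.7]{Jarchow}. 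Therefore $R(\lambda,A)\in L_b(X)_0$, and together with the bijectivity of $\lambda-A$ this gives $\lambda\in\rho(A)$.

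For \textquotedblleft{}$\Rightarrow$\textquotedblright{}, let $\lambda\in\rho(A)$ and choose $\mu\in\mathbb{C}\backslash\{0\}$ such that $B:=\mu R(\lambda,A)\in L(X)$ has $\{B^n\}_{n\in\mathbb{N}_0}$ bounded in $L_b(X)$, equivalently equicontinuous by \cite[Prop.~11.2.7]{Jarchow}. Here I would run the \textquotedblleft{}recalibration trick\textquotedblright{} exactly as in the proof of Lemma~\ref{LEM-0}(i)$\Rightarrow$(ii): fix any fundamental system $\Gamma'$ of $X$ and set $\tilde q(x)=\sup_{n\in\mathbb{N}_0}q(B^n x)$ for $q\in\Gamma'$. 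Equicontinuity bounds each $\tilde q$ above by a continuous seminorm, so $\tilde q\in\cs(X)$, and since $q\leqslant\tilde q$ the family $\Gamma=\{\tilde q\:;\:q\in\Gamma'\}$ is again a fundamental system, cf.~\cite[Rem.~2.2(i)]{ABR13}. A shift of the supremum index yields $\tilde q(Bx)=\sup_{n\geqslant1}q(B^n x)\leqslant\tilde q(x)$, that is $\tilde q(R(\lambda,A)x)\leqslant|\mu|^{-1}\tilde q(x)$ for all $\tilde q\in\Gamma$ and $x\in X$, so $M_\lambda:=|\mu|^{-1}$ does the job.

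I do not expect a genuine obstacle: the argument is in essence a discrete instance of the recalibration already used for Lemma~\ref{LEM-0}, glued to the barrelledness fact \cite[Prop.~11.2.7]{Jarchow}. The only point that needs a little care is the direction \textquotedblleft{}$\Leftarrow$\textquotedblright{}, where the definition of $L_b(X)_0$ asks merely for boundedness of the powers of \emph{some} nonzero scalar multiple of $R(\lambda,A)$, so one has to produce this scalar out of the uniform bound $M_\lambda$, and one should first record that the seminorm estimate makes $R(\lambda,A)$ continuous before forming its powers inside the algebra $L_b(X)$.
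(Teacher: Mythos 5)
Your argument is correct and coincides with the paper's own proof: the direction \textquotedblleft{}$\Rightarrow$\textquotedblright{} uses the same recalibration $\tilde q(x)=\sup_{n}q([\mu R(\lambda,A)]^nx)$ followed by the index shift, and \textquotedblleft{}$\Leftarrow$\textquotedblright{} iterates the estimate and rescales by a suitable $\mu$ (the paper takes $\mu=M_\lambda^{-1}$ where you take $(1+M_\lambda)^{-1}$, an immaterial difference). Your extra remark that the seminorm estimate already yields $R(\lambda,A)\in L(X)$ is a small but welcome point the paper leaves implicit.
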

\begin{proof}\textquotedblleft{}$\Rightarrow$\textquotedblright{} Let $\Gamma$ be some fundamental system. For $\lambda\in\mathcal{U}\subseteq\rho(A)$ we select $\mu\in\mathbb{C}\backslash\{0\}$ such that $\{[\mu{}R(\lambda,A)]^n\}_{n\in{\mathbb{N}_0}}$ is equicontinuous. We define $\Gamma'=\{p'\:;\:p\in\Gamma\}$ via $p'(x)=\sup_{n\in{\mathbb{N}_0}}p([\mu{}R(\lambda,A)]^nx)$ for $x\in X$. For every $p\in\Gamma$ we find $q\in\Gamma$ and $M\geqslant0$ such that $p([\mu{}R(\lambda,A)]^nx)\leqslant{}M q(x)$ holds for all $n$ and $x$ and thus $p'\leqslant Mq$ is valid. On the other hand for every $p\in\Gamma$ we have $p\leqslant{}p'$. It follows that $\Gamma'$ is a fundamental system for $X$. We put $M_{\lambda}=1/\mu$ and compute $p'(R(\lambda,A)x)=\sup_{n\in{\mathbb{N}_0}}p([\mu{}R(\lambda,A)]^{n}R(\lambda,A)x)=\mu^{-1}\sup_{n\in{\mathbb{N}_0}}p([\mu{}R(\lambda,A)]^{n+1}x)\leqslant M_{\lambda}\,p'(x)$ for arbitrary $p'$ and $x$.
\smallskip
\\\textquotedblleft{}$\Leftarrow$\textquotedblright{} For $\lambda\in\mathcal{U}$ we select $\Gamma$ and $M_{\lambda}$ as in the condition and put $\mu=M_{\lambda}^{-1}$. For $p\in\Gamma$ and $n\in{\mathbb{N}_0}$ we compute $p([\mu{}R(\lambda,A)]^nx)\leqslant{}M_{\lambda}^{-n}p(R(\lambda,A)^nx)\leqslant{}M_{\lambda}^{-n}M_{\lambda}^n\,p(x)=p(x)$, i.e., $R(\lambda,A)\in L_b(X)_0$.
\end{proof}

Recently, Albanese, Bonet, Ricker \cite[Section 3]{ABR13} defined resolvent and spectrum for non-continuous operators on locally convex spaces in a different way; in their notation $\lambda\in\mathbb{C}$ is in the resolvent if and only if $R(\lambda,A)$ exists in $L(X)$ and thus their spectrum is a subset of $\sigma(A)\cap\mathbb{C}$. In \cite[Equation (3.7)]{ABR13}, they studied the condition of Lemma \ref{LEM-2}
as an additional property of points in the resolvent.
\smallskip
\\Arikan, Runov, Zahariuta \cite{ARZ} introduced the ultraspectrum for operators in $A\in L_b(X)$. In their notation, $\lambda\in\mathbb{C}$ is a strictly regular point of $A$ if $R(\lambda,A)$ exists in $L(X)$ and is tamable \cite[p.~29]{ARZ}, i.e., there exists a fundamental system of seminorms $\Gamma$ for $X$ such that for all $q\in\Gamma$ there exists $C\geqslant0$ such that $q(R(\lambda,A)x)\leqslant{}C q(x)$ holds for all $x\in X$. The point $\lambda=\infty$ is strictly regular if $A$ itself is tamable. The ultraspectrum is the complement of the strictly regular points in $\overline{\mathbb{C}}$. See \cite[Dfn.~10 and Thm.~14]{ARZ} for details. By Lemma \ref{LEM-2} it follows that the ultraspectrum is contained in $\sigma(A)$; for $\lambda=\infty$ an argument similar to those in the proof of Lemma \ref{LEM-2} can be used.
\smallskip
\\If $A$ is the generator of the $C_0$-semigroup $\T$, i.e.,
\begin{equation*}
Ax=\lim_{t\searrow0}{\textstyle\frac{T(t)x-x}{t}} \;\text{ for }\; x\in D(A)=\{x\in X\:;\:\lim_{t\searrow0}{\textstyle\frac{T(t)x-x}{t}}\text{ exists}\},
\end{equation*}
the spectral theory of Allan yields the following relation between the spectral bound of the generator and the semigroup itself if $A$ is a so-called power bounded operator. According to Allan, we put $\rad(B)=\sup\{|\lambda|\:;\:\lambda\in\sigma(B)\}$ for $B\in L(X)$ and $|\infty|=+\infty$.

\begin{prop}\label{PROP-2} Let $\T$ be an exponentially equicontinuous $C_0$-semigroup. Let $(A,D(A))$ be its generator and assume that $\{A^n\}_{n\in{\mathbb{N}_0}}\subseteq L(X)$ is equicontinuous. Then $s(A)=\log\rad(T(1))$ holds.
\end{prop}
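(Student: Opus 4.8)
The plan is to deduce the identity from Allan's holomorphic functional calculus together with the associated spectral mapping theorem, once one knows that $T(1)$ is the exponential of $A$.

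First I would unwind the hypothesis. Equicontinuity of $\{A^n\}_{n\in\mathbb{N}_0}$ contains the case $A=A^1\in L(X)$, so $D(A)=X$; moreover, by \cite[Prop.~11.2.7]{Jarchow} it means that $\{A^n\}_{n\in\mathbb{N}_0}$ is bounded in $L_b(X)$, so choosing $\mu=1$ in the definition of boundedness shows $A\in L_b(X)_0$. Hence $\delta_A=\{\infty\}$, i.e.\ $\infty\in\rho(A)$, and by Allan's theory \cite{Allan} the spectrum $\sigma(A)$ is a nonempty compact subset of $\mathbb{C}$; in particular $s(A)=\sup\{\Re\lambda\:;\:\lambda\in\sigma(A)\}$ is a finite real number, the supremum being attained since $\sigma(A)$ is compact.

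Next I would identify $T(1)$ with $e^A$. Since $\{A^n\}_{n\in\mathbb{N}_0}$ is bounded in $L_b(X)$, for each $q\in\cs(X)$ and $B\in\mathcal{B}$ the tail estimate $\sum_{n\geqslant N}q_B(t^nA^n/n!)\leqslant(\sup_{n}q_B(A^n))\sum_{n\geqslant N}t^n/n!$ shows that the series $e^{tA}:=\sum_{n\geqslant0}t^nA^n/n!$ converges in the complete space $L_b(X)$, uniformly for $t$ in compact intervals; the same estimate applied to the termwise derivative yields $\frac{d}{dt}e^{tA}x=Ae^{tA}x$ for all $x\in X$. By the standard properties of $C_0$-semigroups and $D(A)=X$, the orbit $t\mapsto T(t)x$ solves the same Cauchy problem $u'=Au$, $u(0)=x$, so the difference $w(t)=T(t)x-e^{tA}x$ satisfies $w(t)=\int_0^t\!\!\cdots\!\int_0^{s_{n-1}}A^nw(s_n)\,ds_n\cdots ds_1$ for every $n$, whence $q(w(t))\leqslant\frac{t^n}{n!}\sup_{0\leqslant s\leqslant t}q(A^nw(s))\leqslant\frac{t^n}{n!}\,C_q\sup_{0\leqslant s\leqslant t}p_q(w(s))\to0$ as $n\to\infty$; here equicontinuity of $\{A^n\}$ provides $p_q\in\cs(X)$ and $C_q\geqslant0$ independent of $n$, and $w$ is continuous on $[0,t]$. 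Thus $w\equiv0$, i.e.\ $T(t)=e^{tA}$, and in particular $T(1)=e^A$.

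Finally I would apply Allan's holomorphic functional calculus for the bounded element $A\in L_b(X)_0$ to the entire function $\exp$: this gives $\exp(A)=e^A=T(1)$ and the spectral mapping theorem $\sigma(T(1))=e^{\sigma(A)}$. Consequently
\[
\rad(T(1))=\sup\{|e^\lambda|\:;\:\lambda\in\sigma(A)\}=\sup\{e^{\Re\lambda}\:;\:\lambda\in\sigma(A)\}=e^{\sup\{\Re\lambda\:;\:\lambda\in\sigma(A)\}}=e^{s(A)},
\]
where the third equality uses that $x\mapsto e^x$ is continuous and strictly increasing and $\sigma(A)$ is compact; taking logarithms yields $s(A)=\log\rad(T(1))$. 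I expect the main obstacle to be the middle step: the paper itself stresses that for a general uniformly continuous semigroup on a Fr\'echet space the series $\sum t^nA^n/n!$ need neither converge nor represent $T(t)$, so it is precisely the equicontinuity of the powers of $A$ that must be exploited — both to obtain convergence of the exponential series in $L_b(X)$ and to run the uniqueness argument that pins down $T(1)=e^A$.
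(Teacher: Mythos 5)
Your proposal is correct and follows essentially the same route as the paper: identify $T(1)$ with $e^{A}$, invoke Allan's spectral mapping theorem for the bounded element $A\in L_b(X)_0$, and compute $e^{\s(A)}=\rad(T(1))$. The only difference is that you prove $T(t)=e^{tA}$ directly (convergence of the exponential series in $L_b(X)$ plus a uniqueness argument for the Cauchy problem), whereas the paper simply cites \cite[Section IX.6]{Yosida} for this identity.
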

\begin{proof} By our assumption we have $A\in L_b(X)_0$ and thus $\infty\not\in\sigma(A)$. By Allan's spectral mapping theorem, see \cite[Prop.~6.11]{Allan} and \cite[p.~414]{Allan} for the definition of those class of functions for which Allan's functional calculus is designed, we have $\sigma(e^{A})=e^{\sigma(A)}$. By \cite[Section IX.6]{Yosida} we have $T(1)x=\Bigsum{n=0}{\infty}{\textstyle\frac{1}{n!}}A^nx=e^{A}x$ for every $x\in X$. We compute $e^{\s(A)}=\sup\{e^{\Re\lambda}\:;\:\lambda\in\sigma(A)\}=\sup\{|\lambda|\:;\:\lambda\in\sigma(e^{A})\}=\rad(T(1))$ and obtain $\s(A)=\log\rad(T(1))$.
\end{proof}

If $X$ is Banach, the above formula is one of the ingredients to prove that $s(A)=\omega_0(T)$ holds for every uniformly continuous semigroup $\T$, cf.~the comments at the end of Section \ref{SEC-5}.


\medskip
\section{Inequality}\label{SEC-4}

We have the following general inequality between spectral bound and growth bound, cf.~\cite[Prop.~II.2.2]{EN}.

\begin{thm}\label{THM} Let $\T$ be an exponentially equicontinuous $C_0$-semigroup with generator $(A,D(A))$. Then we have $-\infty\leqslant\s(A)\leqslant\omega_0(T)<+\infty$.
\end{thm}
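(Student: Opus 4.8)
The plan is to adapt the classical Banach-space argument via the Laplace transform, making sure each step survives in the Fr\'{e}chet setting. The finiteness $\omega_0(T)<+\infty$ is immediate from the standing assumption that $\T$ is exponentially equicontinuous: the set of $\omega$ over which the infimum defining $\omega_0(T)$ is formed is then non-empty. Likewise $-\infty\leqslant\s(A)$ holds trivially with the convention $\sup\varnothing=-\infty$. So the substance of the statement is the inequality $\s(A)\leqslant\omega_0(T)$, and for this it suffices to show that $\{\lambda\in\mathbb{C}\:;\:\Re\lambda>\omega_0(T)\}\subseteq\rho(A)$.

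To do so, fix such a $\lambda$ and choose $\omega$ with $\omega_0(T)<\omega<\Re\lambda$. By Lemma \ref{LEM-0} there is a fundamental system $\Gamma$ and a constant $M\geqslant1$ with $q(T(t)x)\leqslant Me^{\omega t}q(x)$ for all $q\in\Gamma$, $t\geqslant0$ and $x\in X$. For fixed $x\in X$ the orbit $t\mapsto e^{-\lambda t}T(t)x$ is continuous (since $\T$ is a $C_0$-semigroup) and satisfies $q(e^{-\lambda t}T(t)x)\leqslant Me^{-(\Re\lambda-\omega)t}q(x)$ for every $q\in\Gamma$; as $X$ is complete, the improper Riemann integral
\[
R_{\lambda}x:=\int_0^{\infty}e^{-\lambda t}T(t)x\,dt
\]
converges in $X$, and moving the seminorms inside the integral yields $q(R_{\lambda}x)\leqslant\tfrac{M}{\Re\lambda-\omega}\,q(x)$ for all $q\in\Gamma$ and $x\in X$. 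In particular $R_{\lambda}\in L(X)$.

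Next one checks, by the computations familiar from the Banach case and using only $C_0$-continuity together with the above exponential bound, that $R_{\lambda}(\lambda-A)x=x$ for $x\in D(A)$ and $(\lambda-A)R_{\lambda}x=x$ for $x\in X$; alternatively one may invoke the resolvent representation for exponentially equicontinuous semigroups from Albanese, Bonet, Ricker \cite{ABR13}. Hence $\lambda-A$ is bijective with $R(\lambda,A)=R_{\lambda}$, so $R(\lambda,A)$ exists in $L(X)$, and the displayed seminorm estimate says precisely that the hypothesis of Lemma \ref{LEM-2} is satisfied, with the same $\Gamma$ and $M_{\lambda}=M/(\Re\lambda-\omega)$. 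Therefore $\lambda\in\rho(A)$, which gives $\sigma(A)\cap\mathbb{C}\subseteq\{\lambda\:;\:\Re\lambda\leqslant\omega_0(T)\}$ and thus $\s(A)\leqslant\omega_0(T)$.

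The only point at which the Fr\'{e}chet-space machinery really enters --- and the step I expect to be the main obstacle --- is promoting the pointwise Laplace-transform estimate to membership of $R(\lambda,A)$ in $L_b(X)_0$; here the recalibrated fundamental system provided by Lemma \ref{LEM-0} and the characterization in Lemma \ref{LEM-2} take over the role played by the single operator norm on a Banach space. The convergence of the vector-valued integral, which uses completeness of $X$, and the routine algebraic identities characterizing the resolvent are the remaining things to verify, but neither is delicate.
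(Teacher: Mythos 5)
Your proposal is correct and follows essentially the same route as the paper: both represent $R(\lambda,A)$ as the Laplace transform of the semigroup for $\Re\lambda>\omega_0(T)$, use the recalibrated fundamental system from Lemma \ref{LEM-0} to get the uniform estimate $p(R(\lambda,A)x)\leqslant M_\lambda\,p(x)$ in a single seminorm system, and then pass to power-boundedness of a multiple of the resolvent (you via Lemma \ref{LEM-2}, the paper by carrying out the same iteration directly after a preliminary rescaling to $\omega_0(T)=0$ and citing \cite{ABR13} for the integral representation and estimate). The differences are only organizational.
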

\begin{proof} By rescaling we may assume w.l.o.g.~that $\omega_0=\omega_0(T)=0$. Then, $(e^{-\omega{}t}T(t))_{t\geqslant0}\subseteq L_b(X)$ is equicontinuous for every $\omega>0$. With \cite[Lem.~4.4]{ABR13} it follows that $R(\lambda,A)$ exists in $L(X)$ for every $\lambda\in\mathbb{C}_+=\{\lambda\in\mathbb{C}\:;\:\Re\lambda>0\}$. We claim that for every $\lambda\in\mathbb{C}_+$ there exists $\mu\in\mathbb{C}\backslash\{0\}$ such that $\{[\mu{}R(\lambda,A)]^n\}_{n\in{\mathbb{N}_0}}\subseteq L(X)$ is equicontinuous.
\smallskip
\\We fix $\omega>0$ such that $\Sg$, $S(t)=e^{-\omega{}t}T(t)$, $t\geqslant0$, is equicontinuous. The generator of $\Sg$ is $(B,D(B))=(A-\omega,D(A))$. By \cite[last paragraph of Rem.~3.5(iv)]{ABR13} we have
\begin{equation*}
\forall\:\lambda'\in\mathbb{C}_+\;\exists\:M_{\lambda'}>0\;\forall\:p\in\Gamma,\,x\in X\colon p(R(\lambda',B)x)\leqslant{}M_{\lambda'}p(x).
\end{equation*}
From \cite[proof of Lem.~4.4]{ABR13} we get that $R(\lambda,A)=R(\lambda-\omega,B)$ holds for every $\lambda\in\mathbb{C}$ with $\Re\lambda>\omega$.
\smallskip
\\Let now $\lambda\in\mathbb{C}_+$ be given. Select $0<\omega<\Re\lambda$ and put $\lambda'=\lambda-\omega\in\mathbb{C}_+$. We select $M_{\lambda'}>0$ as above and put $\mu=1/M_{\lambda'}$. For given $p\in\Gamma$, $n\in{\mathbb{N}_0}$ and $x\in X$ we have by iteration $p(R(\lambda',B)^nx)\leqslant M_{\lambda'}^np(x)$ and thus
$p([\mu{}R(\lambda,A)]^nx)=p([M_{\lambda'}^{-1}R(\lambda-\omega,B)]^nx)=M_{\lambda'}^{-n}p([R(\lambda',B)]^nx)\leqslant p(x)$ which establishes the claim. Consequently, we have $\mathbb{C}_+\subseteq\rho(A)$, i.e., every $\lambda\in\sigma(A)\cap\mathbb{C}$ satisfies $\Re\lambda\leqslant0$. Therefore, $\s(A)\leqslant0$ follows.
\end{proof}

The inequality in Theorem \ref{THM} remains valid if we replace our notion of spectrum by those of \cite{ABR13}. If $A$ belongs to $L(X)$ and we replace $\sigma(A)$ by the ultraspectrum \cite{ARZ} the estimate also applies.


\medskip
\section{Example}\label{SEC-5}

The following example was studied by Albanese, Bonet, Ricker in \cite[Rem.~3.5(v)]{ABR13}. Below, we correct some flaws and extend their investigation.

\begin{ex}\label{EX-0} Let $X=\mathbb{C}^{\mathbb{N}}$ be the space of all complex sequences endowed with the topology of coordinate wise convergence given by $\Gamma=\{p_n\:;\:n\in\mathbb{N}\}$, $p_n(x)=\max_{j=1,\dots,n}|x_j|$ for $x=(x_1,x_2,\dots)$. The operator $A\colon X\rightarrow X$, $Ax=(0,x_1,x_2,\dots)$, denotes the right shift on $X$. We have $\{A^n\}_{n\in{\mathbb{N}_0}}\subseteq L(X)$ and the latter is an equicontinuous set. The strongly continuous semigroup $\T$ generated by $A\colon X\rightarrow X$ is thus given by the power series
\begin{equation*}
T(t)x=\Bigsum{k=0}{\infty}{\textstyle\frac{1}{k!}}(tA)^kx=(x_1,x_2+tx_1,x_3+tx_2+{\textstyle\frac{t^2}{2!}}x_1,x_4+tx_3+{\textstyle\frac{t^2}{2!}}x_2+{\textstyle\frac{t^3}{3!}}x_1,\cdots)
\end{equation*}
for $x\in X$ and $t\geqslant0$, see \cite[p.~245]{Yosida} and \cite[Rem.~3.5(v)]{ABR13}. We observe that for given $\omega>0$ and $n\in\mathbb{N}$ there exists $M\geqslant 1$ such that $p_n(T(t)x)\leqslant Me^{\omega t}p_n(x)$ holds for all $t\geqslant0$ and $x\in X$. Whence, $\omega_0(T)$ must be less or equal to zero. On the other hand we compute $p_n(T(t)x)=1+t+\cdots+t^{n-1}/(n-1)!$ for $x=(1,1,\dots)\in X$. This shows that in the previous condition for $0<\omega<1$ the constant $M\geqslant1$ cannot be independent of $n\in\mathbb{N}$. Consequently, $\omega_{0,\Gamma}(T)=1$.
\smallskip
\\For $\lambda=0$ the map $\lambda-A$ is not surjective, for $\lambda\not=0$ it can be checked that $R(\lambda,A)$ exists in $L(X)$ and is given by the formula
\begin{equation*}
R(\lambda,A)x=({\textstyle\frac{1}{\lambda}}x_1,{\textstyle\frac{1}{\lambda}}x_2+{\textstyle\frac{1}{\lambda^2}}x_1,{\textstyle\frac{1}{\lambda}}x_3+{\textstyle\frac{1}{\lambda^2}}x_2+{\textstyle\frac{1}{\lambda^3}}x_1,\dots)
\end{equation*}
for $x\in X$. We estimate
\begin{eqnarray*}
p_n(R(\lambda,A)x)
& \leqslant & \max\{{\textstyle\frac{1}{|\lambda|}}|x_1|,{\textstyle\frac{1}{|\lambda|}}|x_2|+{\textstyle\frac{1}{|\lambda|^2}}|x_1|,\dots,{\textstyle\frac{1}{|\lambda|}}|x_n|+\cdots+{\textstyle\frac{1}{|\lambda|^n}}|x_1|\}\\
&\leqslant& \max\{{\textstyle\frac{1}{|\lambda|}},{\textstyle\frac{1}{|\lambda|}}+{\textstyle\frac{1}{|\lambda|^2}},\dots,{\textstyle\frac{1}{|\lambda|}}+\cdots+{\textstyle\frac{1}{|\lambda|^n}}\}\,\max\{|x_1|,\dots,|x_n|\}\\
& \leqslant & \Bigsum{j=1}{n}({\textstyle\frac{1}{|\lambda|}})^{j}\,p_n(x)\;=\;{\textstyle\frac{1-1/|\lambda|^n}{|\lambda|-1}}\,p_n(x)\;\leqslant\; {\textstyle\frac{1}{|\lambda|-1}}\,p_n(x)
\end{eqnarray*}
for $|\lambda|>1$, $x\in X$ and $n\in\mathbb{N}$. Using Lemma \ref{LEM-2} with $\mathcal{U}=\{z\in\mathbb{C}\:;\:|z|>1\}$ and $M_\lambda=(|\lambda|-1)^{-1}$ for $\lambda\in\mathcal{U}$ it follows that all $\lambda\in\mathbb{C}$ with $|\lambda|>1$ belong to $\rho(A)$. For $\mathcal{U}'=\{z\in\mathbb{C}\:;\:0<|z|\leqslant1\}$ the condition
\begin{equation*}
\forall\:\lambda\in\mathcal{U}'\;\exists\:M_{\lambda}>0\;\forall\:p\in\Gamma,\,x\in X\colon p(R(\lambda,A)x)\leqslant M_{\lambda}p(x)
\end{equation*}
is not satisfied. Let $|\lambda|\leqslant1$ with $\lambda\not=1$. For $x=(1,1,\dots)$ we get
\begin{equation*}
p_n(R(\lambda,A)x)=\max\{|{\textstyle\frac{1}{\lambda}}|,|{\textstyle\frac{1}{\lambda}}+{\textstyle\frac{1}{\lambda^2}}|,\dots,|{\textstyle\frac{1}{\lambda}}+\cdots+{\textstyle\frac{1}{\lambda^n}}|\}=\max_{1\leqslant{}k\leqslant{}n}\big|{\textstyle\frac{1-1/\lambda^k}{\lambda-1}}\big|\geqslant \big|{\textstyle\frac{1-1/\lambda^n}{\lambda-1}}\big|\stackrel{n\rightarrow\infty}{\longrightarrow}\infty.
\end{equation*}
For $\lambda=1$ and $x$ as above we get $p_n(R(\lambda,A)x)=n$. Therefore, for no $\lambda\in\mathbb{C}$ with $0<|\lambda|\leqslant1$ there can exist $M_{\lambda}$ such that $p_n(R(\lambda,A)x)\leqslant M_{\lambda}\,p_n(x)$ holds for every $x\in X$ and $n\in\mathbb{N}$. Nevertheless, $\mathcal{U}'\subseteq\rho(A)$ holds. In particular, the aforementioned inequality becomes true if we switch to another system of seminorms, see Lemma \ref{LEM-2}. We fix $x\in X$ and $\lambda\in\mathcal{U}'$. By induction we show that
\begin{equation*}
[n]\;\;\;\;\;\;\forall\:k\in\mathbb{N}\;\colon\;(R(\lambda,A)^nx)_k =\Bigsum{j=1}{k}\lambda^{-(n+j-1)}\,{\textstyle\binom{n-2+j}{j-1}}\,x_{k-j+1}
\end{equation*}
holds for every $n\in\mathbb{N}$. We observe that we established the statement for $n=1$ already above. Moreover, we have the recursion formula $(R(\lambda,A)x)_k=\lambda^{-1}x_k+\lambda^{-1}(R(\lambda,A)x)_{k-1}$ for arbitrary $k$ if we put $(R(\lambda,A)x)_{0}=0$. Now we show $[n]\Rightarrow[n+1]$. In order to establish $[n+1]$ we proceed by induction over $k$. Using the recursion formula and then $[n]$ we get $(R(\lambda,A)^{n+1}x)_1=\lambda^{-1}(R(\lambda,A)^nx)_1=\lambda^{-(n+1)}x_1$ which coincides with the right hand side of the equation in $[n+1]$ for $k=1$. For $k\geqslant2$ we use the recursion formula to get $(R(\lambda,A)^{n+1}x)_k=\lambda^{-1}(R(\lambda,A)^nx)_k+\lambda^{-1} (R(\lambda,A)^{n+1}x)_{k-1}$. For the first summand we use the induction hypothesis of the induction over $n$ and for the second those of the induction over $k$. Then we obtain
\begin{eqnarray*}
(R(\lambda,A)^{n+1}x)_k & = & \lambda^{-1}\Bigsum{j=1}{k}\lambda^{-(n+j-1)}\,{\textstyle\binom{n-2+j}{j-1}}\,x_{k-j+1}+\lambda^{-1}\Bigsum{j=1}{k-1}\lambda^{-(n+j+1)}\,{\textstyle\binom{n-1+j}{j-1}}\,x_{k+j}\\
& = & \lambda^{-(n+1)}\,x_{k}+\Bigsum{j=2}{k}\lambda^{-(n+j)}\,\big[{\textstyle\binom{n-2+j}{j-1}}+{\textstyle\binom{n-2+j}{j-2}}\big]\,x_{k-j+1}\\
& = & \Bigsum{j=1}{k}\lambda^{-(n+j)}\,{\textstyle\binom{n-1+j}{j-1}}\,x_{k-j+1}
\end{eqnarray*}
which is the right hand side of the equation in $[n+1]$ for $k$. This finishes both inductions. We claim
\begin{equation*}
\forall\:\lambda\in\mathcal{U}'\;\exists\:\mu\in\mathbb{C}\backslash\{0\}\;\forall\:m\in\mathbb{N}\;\exists\:K\geqslant0\;\forall\:n\in{\mathbb{N}_0},\,x\in X \colon p_{m}([\mu{}R(\lambda,A)]^nx)\leqslant K p_{m}(x)
\end{equation*}
which implies that $R(\lambda,A)\in L_b(X)_0$ holds for every $\lambda\in\mathcal{U}'$. For given $\lambda\in\mathcal{U}'$ select $\mu$ such that $|\mu|/|\lambda|\leqslant1/2$ holds. Let $m$ be given. Put $K=|\lambda|^{-(m-1)}e^{m}\sup_{n\in\mathbb{N}}2^{-n}(n+1)^{m}$. Let $n$ and $x$ be given. Since $K\geqslant1$ we are done if $n=0$. Otherwise we estimate
\begin{eqnarray*}
p_{m}([\mu{}R(\lambda,A)]^nx) & \leqslant & |\mu|^{n} \max_{1\leqslant{}k\leqslant{}m}\,\Bigsum{j=1}{k}|\lambda|^{-(n+j-1)}{\textstyle\binom{n-2+j}{j-1}}|x_{k-j+1}|\\
&\leqslant & \big({\textstyle\frac{|\mu|}{|\lambda|}}\big)^{n}\max_{1\leqslant{}k\leqslant{}m}\,\Bigsum{j=1}{k}|\lambda|^{-(j-1)}{\textstyle\binom{n-2+j}{j-1}}\,p_k(x)\\
&\leqslant & 2^{-n}\,|\lambda|^{-(m-1)}\,\big(\max_{1\leqslant{}k\leqslant{}m}\,\Bigsum{j=1}{k}{\textstyle\binom{n-2+j}{j-1}}\big)\,p_{m}(x)\;\leqslant\;K\,p_{m}(x)
\end{eqnarray*}
where the last inequality holds since
\begin{equation*}
\Bigsum{j=1}{k}{\textstyle\binom{n-2+j}{j-1}} \leqslant \Bigsum{j=0}{k}{\textstyle\binom{n-1+j}{j}}={\textstyle\binom{n+k}{k}}\leqslant \big({\textstyle\frac{(n+k)e}{k}}\big)^{k}\leqslant e^k ({\textstyle\frac{n}{k}}+1)^k\leqslant{}e^{m}(n+1)^{m}
\end{equation*}
is true whenever $1\leqslant{}k\leqslant{}m$. Finally, we get $\sigma(A)=\{0\}$ and thus $\s(A)=0$.
\end{ex}

If $X$ is a Banach space then $\s(A)=\omega_0(T)$ holds whenever $\T$ is uniformly continuous. The proof for this equality employs Hadamard's formula for the spectral radius $\rad(\cdot)$ and the Banach space versions of Proposition \ref{PROP-1} and Proposition \ref{PROP-2}. Indeed, Allan \cite[Thm.~3.12 and Prop.~2.18]{Allan} proved a Hadamard type formula for the spectral radius within his theory. But neither the formula $\rad(T(t_0))=\lim_{n\rightarrow\infty}\|T(t_0)^n\|_{\Gamma}^{1/n}$ nor the equality $\omega_{0,\Gamma}(T)=\frac{1}{t_0}\log\rad(T(t_0))$ extend in general to the Fr\'{e}chet case---not even for \textquotedblleft{}nice\textquotedblright{} fundamental systems $\Gamma$. Already for $t_0=1$ the above and the Propositions \ref{PROP-1} and \ref{PROP-2} would imply that $\s(A)=\omega_{0,\Gamma}(T)$ holds in Example \ref{EX-0}. On the other hand, it seems to be open if $s(A)=\omega_0(T)$ holds in general or can fail for uniformly continuous semigroups on Fr\'{e}chet spaces when $s(A)$ is defined 
as in Section \ref{SEC-3}, i.e., with respect to the spectral theory of \cite{Allan}, or with respect to the ultraspectrum \cite{ARZ}.

\bigskip

\footnotesize

{\sc Acknowledgements. }The author's stay at the Sobolov Institute of Mathematics at Novosibirsk was supported by the German Academic Exchange Service (DAAD). The author likes to express his gratitude to the referee for pointing out a mistake in the original version of this paper and for giving several valuable comments and references.

\normalsize

\bibliographystyle{amsplain}

\begin{thebibliography}{10}

\bibitem{ABR13}
A.~A. Albanese, J.~Bonet, and W.~J. Ricker, \emph{Montel resolvents and
  uniformly mean ergodic semigroups of linear operators}, Quaest. Math.
  \textbf{36} (2013), no.~2, 253--290.

\bibitem{Allan}
G.~R. Allan, \emph{A spectral theory for locally convex alebras}, Proc. London
  Math. Soc. (3) \textbf{15} (1965), 399--421.

\bibitem{ARZ}
H.~Arikan, L.~Runov, and V.~Zahariuta, \emph{Holomorphic functional calculus
  for operators on a locally convex space}, Results Math. \textbf{43} (2003),
  no.~1-2, 23--36.

\bibitem{EN}
K.-J. Engel and R.~Nagel, \emph{One-parameter semigroups for linear evolution
  equations}, Springer-Verlag, New York, 2000.

\bibitem{FEJKW}
L.~Frerick, E.~Jord\'{a}, T.~Kalmes, and J.~Wengenroth, \emph{Strongly
  continuous semigroups on some {F}r\'{e}chet spaces}, J. Math. Anal. Appl.
  \textbf{412} (2014), no.~1, 121--124.

\bibitem{Jarchow}
H.~Jarchow, \emph{Locally {C}onvex {S}paces}, B. G. Teubner, Stuttgart, 1981.

\bibitem{J}
G.~A. Joseph, \emph{Boundedness and completeness in locally convex spaces and
  algebras}, J. Austral. Math. Soc. Ser. A \textbf{24} (1977), no.~1, 50--63.

\bibitem{Komura}
T.~K{\=o}mura, \emph{Semigroups of operators in locally convex spaces}, J.
  Functional Analysis \textbf{2} (1968), 258--296.

\bibitem{KoetheII}
G.~K{\"o}the, \emph{Topological vector spaces. {I}, {II}}, Springer-Verlag, New
  York, 1969/79.

\bibitem{Moore}
R.~T. Moore, \emph{Banach algebras of operators on locally convex spaces},
  Bull. Amer. Math. Soc. \textbf{75} (1969), 68--73.

\bibitem{Yosida}
K.~Yosida, \emph{Functional analysis}, Springer-Verlag, Berlin, 1995, Reprint
  of the sixth (1980) edition.

\end{thebibliography}

\end{document}